\begin{document} \newtheorem{prop}{Proposition}[section] \newtheorem{lem}[prop]{Lemma} \newtheorem{cor}[prop]{Corollary} 
\newtheorem{conj}[prop]{Conjecture} \newtheorem{defi}[prop]{Definition} \newtheorem{thm}[prop]{Theorem} \newtheorem{rem}[prop]{Remark} 
\newtheorem{rems}[prop]{Remarks} \newtheorem{fac}[prop]{Property} \newtheorem{facs}[prop]{Properties} \newtheorem{com}[prop]{Comments} 
\newtheorem{prob}{Problem} \newtheorem{problem}[prob]{Problem} \newtheorem{ques}{Question} \newtheorem{question}[ques]{Question} 
\newtheorem{ejem}[prop]{Example}


\def\scr{\scriptstyle} \def\\{\cr} \def\({\left(} \def\){\right)} \def\[{\left[} \def\]{\right]} \def\<{\langle} \def\>{\rangle} 
\def\fl#1{\left\lfloor#1\right\rfloor} \def\rf#1{\left\lceil#1\right\rceil} \def\defn{\noindent{\bf Definition\/}\ \ } \def\rem{\noindent{\bf 
Remarks\/}\ \ } \def\card#1{\vphantom{#1}^{\#}}

\def\Z{\mathbb Z}
\def\C{\mathcal C}
\def\N{\mathbb N }
\def\P{\mathbb P} \def\R{\mathbb R} \def\N{\mathbb N} \def\A{\mathcal A} \def\cS{\mathcal S} \def\e{{\bf e}} \def\cR{\mathcal R} \def\cP{\mathcal P} 
\def\cQ{\mathcal Q} \def\cT{\mathcal T} \def\p{\textbf p} \def\q{\textbf q} \def\r{\textbf r} \def\s{\textbf s} \def\rad{{\mathrm{rad}\/}} 
\def\eps{\varepsilon} \def\ep{{\mathbf{\,e}}_p} \def\epp{{\mathbf{\,e}}_{p-1}}

\def\xxx{\vskip5pt\hrule\vskip5pt} \def\yyy{\vskip5pt\hrule\vskip2pt\hrule\vskip5pt}

\newcommand{\comm}[1]{\marginpar {\fbox{#1}}} \newcommand{\1}{1\!{\mathrm l}} \newcommand{\croix}{ set\frown}






\def\bbbr{{\rm I\!R}} 
\def\bbbf{{\rm I\!F}} \def\bbbh{{\rm I\!H}} \def\bbbk{{\rm I\!K}} \def\bbbl{{\rm I\!L}} \def\bbbp{{\rm I\!P}} \newcommand{\lcm}{{\rm lcm}} 
\def\bbbone{{\mathchoice {\rm 1\mskip-4mu l} {\rm 1\mskip-4mu l} {\rm 1\mskip-4.5mu l} {\rm 1\mskip-5mu l}}} \def\bbbc{{\mathchoice 
{\setbox0=\hbox{$\displaystyle\rm C$}\hbox{\hbox to0pt{\kern0.4\wd0\vrule height0.9\ht0\hss}\box0}} {\setbox0=\hbox{$\textstyle\rm C$}\hbox{\hbox 
to0pt{\kern0.4\wd0\vrule height0.9\ht0\hss}\box0}} {\setbox0=\hbox{$\scriptstyle\rm C$}\hbox{\hbox to0pt{\kern0.4\wd0\vrule height0.9\ht0\hss}\box0}} 
{\setbox0=\hbox{$\scriptscriptstyle\rm C$}\hbox{\hbox to0pt{\kern0.4\wd0\vrule height0.9\ht0\hss}\box0}}}} \def\bbbq{{\mathchoice 
{\setbox0=\hbox{$\displaystyle\rm Q$}\hbox{\raise 0.15\ht0\hbox to0pt{\kern0.4\wd0\vrule height0.8\ht0\hss}\box0}} {\setbox0=\hbox{$\textstyle\rm 
Q$}\hbox{\raise 0.15\ht0\hbox to0pt{\kern0.4\wd0\vrule height0.8\ht0\hss}\box0}} {\setbox0=\hbox{$\scriptstyle\rm Q$}\hbox{\raise 0.15\ht0\hbox 
to0pt{\kern0.4\wd0\vrule height0.7\ht0\hss}\box0}} {\setbox0=\hbox{$\scriptscriptstyle\rm Q$}\hbox{\raise 0.15\ht0\hbox to0pt{\kern0.4\wd0\vrule 
height0.7\ht0\hss}\box0}}}} \def\bbbt{{\mathchoice {\setbox0=\hbox{$\displaystyle\rm T$}\hbox{\hbox to0pt{\kern0.3\wd0\vrule 
height0.9\ht0\hss}\box0}} {\setbox0=\hbox{$\textstyle\rm T$}\hbox{\hbox to0pt{\kern0.3\wd0\vrule height0.9\ht0\hss}\box0}} 
{\setbox0=\hbox{$\scriptstyle\rm T$}\hbox{\hbox to0pt{\kern0.3\wd0\vrule height0.9\ht0\hss}\box0}} {\setbox0=\hbox{$\scriptscriptstyle\rm 
T$}\hbox{\hbox to0pt{\kern0.3\wd0\vrule height0.9\ht0\hss}\box0}}}} \def\bbbs{{\mathchoice {\setbox0=\hbox{$\displaystyle \rm 
S$}\hbox{\raise0.5\ht0\hbox to0pt{\kern0.35\wd0\vrule height0.45\ht0\hss}\hbox to0pt{\kern0.55\wd0\vrule height0.5\ht0\hss}\box0}} 
{\setbox0=\hbox{$\textstyle \rm S$}\hbox{\raise0.5\ht0\hbox to0pt{\kern0.35\wd0\vrule height0.45\ht0\hss}\hbox to0pt{\kern0.55\wd0\vrule 
height0.5\ht0\hss}\box0}} {\setbox0=\hbox{$\scriptstyle \rm S$}\hbox{\raise0.5\ht0\hbox to0pt{\kern0.35\wd0\vrule 
height0.45\ht0\hss}\raise0.05\ht0\hbox to0pt{\kern0.5\wd0\vrule height0.45\ht0\hss}\box0}} {\setbox0=\hbox{$\scriptscriptstyle\rm 
S$}\hbox{\raise0.5\ht0\hbox to0pt{\kern0.4\wd0\vrule height0.45\ht0\hss}\raise0.05\ht0\hbox to0pt{\kern0.55\wd0\vrule height0.45\ht0\hss}\box0}}}} 
\def\bbbz{{\mathchoice {\hbox{$\sf\textstyle Z\kern-0.4em Z$}} {\hbox{$\sf\textstyle Z\kern-0.4em Z$}} {\hbox{$\sf\scriptstyle Z\kern-0.3em Z$}} 
{\hbox{$\sf\scriptscriptstyle Z\kern-0.2em Z$}}}} \def\ts{\thinspace}

\def\squareforqed{\hbox{\rlap{$\sqcap$}$\sqcup$}} \def\qed{\ifmmode\squareforqed\else{\unskip\nobreak\hfil 
\penalty50\hskip1em\null\nobreak\hfil\squareforqed \parfillskip=0pt\finalhyphendemerits=0\endgraf}\fi}

\def\cD{{\mathcal D}} \def\cE{{\mathcal E}} \def\cF{{\mathcal F}} \def\cG{{\mathcal G}} \def\cH{{\mathcal H}} \def\cI{{\mathcal I}} \def\cJ{{\mathcal 
J}} \def\cK{{\mathcal K}} \def\cL{{\mathcal L}} \def\cM{{\mathcal M}} \def\cN{{\mathcal N}} \def\cO{{\mathcal O}} \def\cP{{\mathcal P}} 
\def\cQ{{\mathcal Q}} \def\cR{{\mathcal R}} \def\cS{{\mathcal S}} \def\cT{{\mathcal T}} \def\cU{{\mathcal U}} \def\cV{{\mathcal V}} \def\cW{{\mathcal 
W}} \def\cX{{\mathcal X}} \def\cY{{\mathcal Y}} \def\cLi{{\rm Li}} \def\cZ{{\mathcal Z}}

\def\Re{\mathrm{Re}}
\def\Im{\mathrm {Im}}
\def \Prob{{\mathrm {}}}

\title{A remark on Ruzsa's construction of an infinite Sidon set}

\author{\sc {Juan Pablo Maldonado L\'opez} \\{ITWM Fraunhofer}\\ {Fraunhofer Platz 1}\\ {67663 Kaiserslautern, Germany}\\ {\tt maldonad@itwm.fraunhofer.de} }
\maketitle

\footnote{AMS Classification Subject: 11P21,11B75. Keywords: Number theory, Additive number theory, Sidon sets, Sum-free sets}

\begin{abstract}
A Sidon set is a subset of the integers with the property that the sums of every two elements are distinct. In 1998, I.Ruzsa gave a probabilistic construction of an infinite Sidon set whose counting function is given by $x^{\sqrt{2}-1+o(1)} $. In this work, we explain the details of the simplification of Ruzsa's construction suggested in \cite{RC}.
\end{abstract}

\section{Introduction}
 A set of integers $\mathcal{S}$ is called a $\textit{Sidon set}$ or $\textit{Sidon sequence} $ if the sums of every two elements of $\mathcal{S}$ are distinct. Observe that writing \textit{differences} instead of \textit{sums} gives an equivalent definition. For example, the set $\{1,2,4,8,16, \ldots \} $ is an infinite Sidon set. Those sets arose in the 30's in the context of 
Fourier analysis. The hungarian analyst Simon Sidon asked Paul Erd\H{o}s about the size of those sets and since then they become of particular interest to number theorists.  

An interesting problem is to construct large Sidon sets. One can ask for either the larger Sidon set contained in the set 
$\{1,2, \ldots ,n\} $ or the asymptotic behaviour of an infinite Sidon set. For the finite case, a Sidon set may be constructed by the greedy algorithm. For the infinite case, it was proven by Erd\H{o}s that the number of elements in a Sidon set up to $x$ is  $\sim \sqrt{x}.$ The current record holder is the hungarian 
mathematician Imre Ruzsa, who built a Sidon set of size $x^{\sqrt{2}-1+o(1)}$ in \cite{R}. Ruzsa's proof is based on the fact that the prime numbers are a multiplicative Sidon set, so the set $\{\log p : p \ \ \text{prime}\}$ is an additive Sidon set. However, this set is unbounded, which created some trouble. Following Ruzsa's construction, we obtain a Sidon set with the same asymptotic behaviour on a slightly different way, by considering the set of arguments of the Gaussian primes instead of the logarithms of the primes. Whenever possible, we have kept Ruzsa's notation, so that the reader familiar with Ruzsa's paper can easily appreciate differences and similarities. It is, however, not required to read Ruzsa's paper in advance.

On the following section, we discuss the finite case and then in the following two sections we explain the construction on the infinite case. The construction is probabilistic, so we will first go through the combinatorial and number theoretical details and in the last section we will go through the probabilistic argument. It is worth mentioning that the 
analog for 
three-sums, that is, a \textit{large} infinite set with the property that every sum of three elements is different (by \textit{large} meaning larger than the obtained from the greedy algorithm), has not yet been constructed.

\section{Finite Sidon sets}
We will first show how can we obtain finite Sidon sets. The first natural approach is the greedy algorithm. Let $a_1=1$ and for $k > 1$ take $a_k$ such that $a_k \notin \{a_i+a_j-a_l | 1 \leq  i,j,l \leq k-1  \}.$ We immediately see that $a_k \leq (k-1)^3+1 $ since we have at most $(k-1)^3 $ forbidden choices for $\{i,j,l\} $ and so with this construction we can get a Sidon set of size $\sim n^\frac{1}{3} $ contained in the set $\{1,2, \ldots, n \}.$

The well known fact that integers can be written in an essentially unique way as product of prime numbers is remarkably useful in the construction of a large Sidon set. We have to translate this multiplicative property into an additive one, the natural way being through the logarithm function. 

As usual, denote with $[y]$ the largest integer less or equal than $y$ and $\{y\}=y-[y]$.

\begin{thm}
The set
\begin{equation*}
\mathcal{X}:= \left \{x_p \in \N :  x_p = \left [\frac{2n}{\log n}\log p \right ] \  \ p \leq \sqrt{\frac{n}{2\log n}},\  p \ \text{prime.} \right  \} 
\end{equation*}
\noindent
is a Sidon set of cardinality $\sim \frac{\sqrt{2n}}{\log^{3/2}n}$ for large enough $n$. 
\end{thm}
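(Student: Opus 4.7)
The plan is to handle the cardinality and the Sidon property separately. For the cardinality, the map $p \mapsto x_p$ is injective on primes $p \leq \sqrt{n/(2\log n)}$ for large $n$, since the gap $\frac{2n}{\log n}(\log p' - \log p) \geq \frac{n}{p'\log n}$ between consecutive primes $p < p'$ in range is far larger than the truncation slack of $1$. Hence $|\mathcal{X}| = \pi(\sqrt{n/(2\log n)})$, and the prime number theorem together with $\log\sqrt{n/(2\log n)} \sim \tfrac{1}{2}\log n$ yields the asymptotic $\sqrt{2n}/\log^{3/2} n$.

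For the Sidon property, suppose $x_p + x_q = x_r + x_s$ with $\{p,q\} \neq \{r,s\}$ as multisets, all four primes being at most $\sqrt{n/(2\log n)}$. From the defining bound $\frac{2n}{\log n}\log p - 1 < x_p \leq \frac{2n}{\log n}\log p$ and its three analogues, summing in pairs and comparing the two sides of the assumed equality gives
$$\frac{2n}{\log n}\bigl|\log(pq) - \log(rs)\bigr| < 2,$$
i.e.\ $|\log(pq/rs)| < \log n / n$. Assume without loss of generality $pq \geq rs$. If $pq = rs$, unique factorization into primes forces $\{p,q\} = \{r,s\}$, a contradiction. If $pq > rs$, then $pq - rs \geq 1$ and the elementary bound $\log(1+x) \geq x/(1+x)$ for $x > 0$ yields
$$\log(pq/rs) \geq \frac{pq - rs}{pq} \geq \frac{1}{pq} \geq \frac{2\log n}{n},$$
where the last inequality uses $pq \leq n/(2\log n)$. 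This contradicts the upper bound $\log n / n$ obtained above, so $\mathcal{X}$ is Sidon.

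The main obstacle, more bookkeeping than conceptual, lies in the calibration of constants: the factor $2$ in the scaling $2n/\log n$ and the factor $2$ in the cutoff $\sqrt{n/(2\log n)}$ are chosen precisely so that the truncation granularity $\log n / n$ is strictly dominated by the minimum Diophantine spacing $1/(pq) \geq 2\log n/n$ between two distinct products of two primes in range. A weaker cutoff on the primes would allow some $pq$ and $rs$ to collide after rounding, while a stronger cutoff would cost in the cardinality; the two constraints have to match exactly, and this matching is what dictates both the form of the definition of $x_p$ and the resulting exponent $1/2$ in the count.
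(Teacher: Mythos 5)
Your proof is correct and follows essentially the same route as the paper: compare the rounding error (bounded by $2$) against the minimum spacing of $\log(pq/rs)$ over distinct products, with the cutoff $p,q,r,s \le \sqrt{n/(2\log n)}$ calibrated so that the latter dominates; the only cosmetic differences are that you bound $\log(1+x)$ below by $x/(1+x)$ rather than by $x/2$, and you put $pq$ rather than $rs$ in the denominator. You also supply the injectivity check needed for the cardinality claim, which the paper's proof omits (it only establishes the Sidon property and leaves the count to the reader), so if anything your write-up is slightly more complete.
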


\begin{proof}
Our first claim is that this set is in fact a Sidon set. To see this, suppose that we have $p,q,r,s$ with $\{p,q\}\neq \{r,s\} $ such that

$$x_p+x_q=x_r+x_s.$$

\noindent
Without loss of generality, assume that $pq>rs.$ Observe that since we have that $x_p+x_q-x_r-x_s=0,$

\begin{equation*}
\frac{2n}{\log n }( \log p + \log q - \log r - \log s)  =  \Big \{\frac{2n}{\log n}\log p \Big \}  + \Big \{ \frac{2n}{\log n} \log q\Big \} - \Big \{ \frac{2n}{\log n} \log r\Big\} - \Big \{\frac{2n}{\log n} \log s \Big \}
\end{equation*}
\noindent
by using the fact that, for $x,y,z,w$ real numbers, the inequality 

\begin{equation}
\label{ineq:floor}
|\{x\}+\{y\}-\{z\}-\{w\} | \leq 2
\end{equation}
 holds, we have 

$$ \frac{2n}{\log n}\log \frac{pq}{rs} \leq 2 $$

\noindent
from which follows that
 $$ \frac{\log n}{n} \geq \log \frac{pq}{rs} $$ 
\noindent
but since

\begin{eqnarray*}
\log \frac{pq}{rs}& = &  \log \Big( 1+\frac{pq-rs}{rs}\Big) \\
& \geq & \log \Big(1 + \frac{1}{rs}\Big) \\
& \geq & \frac{1}{2rs} \\
&>& \frac{\log n}{n}
\end{eqnarray*}
where the first inequality follows by unique factorization, the second inequality from the inequality $(1+x)^2\geq e^x$ for $x<2$ and by taking logarithms on both sides, and the third inequality follows from the definition of $r$ and $s$.
\end{proof}

Ruzsa, in a very clever way, took advantage of the previous theorem to create an infinite Sidon set by considering the sequence of the logarithms of the primes. However, some difficulties arose from the fact that this sequence is unbounded. We now introduce another way of constructing a finite Sidon set, which will help us to motivate an analog construction for the infinite case. 

Let $\P$ be the set of prime numbers congruent to $1$ modulo $4$. For $p \in \P$, take a Gaussian prime $\rho_p$ such that $p =\rho_p \bar{\rho_p} $, with $\bar{\rho_p}$ such that $\Re{\bar{\rho_p}}>\Im{\bar{\rho_p}}>0 $, where $\Re z, \Im z$ denote the real and imaginary parts of the complex number $z$. 
Write $\frac{\bar{\rho_p}}{\rho_p} = e^{2 \pi i \phi_p} $ with $\phi_p$ a number in $[0,1).$ We write $|x|$ for the absolute value of the real number $x$ and $\|z\|$ for the norm of the complex number $z$. Note that the sequence of numbers $(\phi_p)_{p \in\P} $ is a 
Sidon set, for if we have $\phi_p+\phi_q=\phi_r+\phi_s $, that would imply that $\bar{\rho_p}\bar{\rho_q}\rho_r\rho_s=\rho_p\rho_q\bar{\rho_r}\bar{\rho_s} $ which 
is impossible, since Gaussian integers also have the unique factorization property. So if the $\phi_p$ were integers, we would have a Sidon set; since they are not, we would have to work them out to build a Sidon set, as we did in the previous theorem. We now state this precisely.

\begin{thm}
The set
$$\mathcal C := \Big\{c_p \in \N :  c_p = \[n \phi_p \], p \in \P,\  p \leq \frac{\sqrt{n}}{4} \Big\}, $$
\noindent
is a Sidon set contained in the set $\{1,2\ldots n\}$ with $\frac{\sqrt{n}}{4 \log n}$ elements.
\end{thm}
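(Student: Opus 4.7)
The plan is to mirror the structure of the proof of the previous theorem: start from the assumption $c_p + c_q = c_r + c_s$ with $\{p,q\}\neq\{r,s\}$, convert it into an arithmetic statement about the $\phi$'s using the floor inequality (\ref{ineq:floor}), and then exploit the unique factorization of Gaussian integers in place of the unique factorization of $\N$. The saving grace that makes $\phi_p$ more convenient than $\log p$ is precisely that the $\phi_p$ lie in $[0,1)$, so one can work with multiplicative relations $\bar\rho_p\bar\rho_q\rho_r\rho_s$ versus $\rho_p\rho_q\bar\rho_r\bar\rho_s$ directly.

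First I would note that, exactly as in Theorem 2.1, the identity $c_p+c_q=c_r+c_s$ together with (\ref{ineq:floor}) yields
\begin{equation*}
n\,|\phi_p+\phi_q-\phi_r-\phi_s|\;\le\;2,
\end{equation*}
so $|\theta|\le 2/n$ for $\theta:=\phi_p+\phi_q-\phi_r-\phi_s$. The key step is now to form the Gaussian integer
\begin{equation*}
z:=\bar\rho_p\bar\rho_q\rho_r\rho_s-\rho_p\rho_q\bar\rho_r\bar\rho_s
=\rho_p\rho_q\bar\rho_r\bar\rho_s\bigl(e^{2\pi i\theta}-1\bigr).
\end{equation*}
If $\{p,q\}\neq\{r,s\}$, then since our normalization forces $\rho_p$ and $\bar\rho_p$ to be non-associate Gaussian primes for each $p\in\P$, unique factorization in $\Z[i]$ guarantees $z\neq 0$, hence $|z|\ge 1$. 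On the other hand $|\rho_p\rho_q\bar\rho_r\bar\rho_s|=\sqrt{pqrs}$ and $|e^{2\pi i\theta}-1|=2|\sin(\pi\theta)|\le 2\pi|\theta|$, so
\begin{equation*}
1\le|z|\le 2\pi\sqrt{pqrs}\,|\theta|\;\le\; 2\pi\cdot\frac{n}{16}\cdot\frac{2}{n}=\frac{\pi}{4},
\end{equation*}
using $p,q,r,s\le\sqrt n/4$. Since $\pi/4<1$, this contradiction establishes the Sidon property.

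For the two remaining claims, the containment $\mathcal C\subset\{1,\ldots,n\}$ follows from $\phi_p\in[0,1)$, which gives $c_p\le n-1$; the lower bound $c_p\ge 1$ for all large $n$ comes from the fact that our choice $\Re\bar\rho_p>\Im\bar\rho_p>0$ confines $\phi_p$ to a subinterval bounded away from $0$. The cardinality estimate $\sim\sqrt n/(4\log n)$ is just Dirichlet's theorem on primes in the residue class $1\bmod 4$ applied up to $\sqrt n/4$: the count is $\sim\tfrac12\cdot\tfrac{\sqrt n/4}{\log(\sqrt n/4)}\sim \sqrt n/(4\log n)$, and injectivity of $p\mapsto c_p$ is a free consequence of the Sidon property (take $q=p$, $r=s$).

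The only genuinely delicate point is the non-associateness of $\rho_p$ and $\bar\rho_p$, which is what makes the unique-factorization step legitimate; this is exactly why the normalization $\Re\bar\rho_p>\Im\bar\rho_p>0$ was imposed, ruling out the cases where $\rho_p$ is real, purely imaginary, or lies on the diagonals $a=\pm b$. The numerical constant $4$ in the cutoff $p\le\sqrt n/4$ is sharp enough because $\pi/4<1$; any constant giving $\sqrt{pqrs}\le n/(2\pi+\varepsilon)$ would have sufficed, which is why no optimization is needed here.
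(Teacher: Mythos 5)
Your proof is correct and takes essentially the same approach as the paper: both derive $n\,|\phi_p+\phi_q-\phi_r-\phi_s|\le 2$ from the floor inequality and then play this off against the lower bound $1/\sqrt{pqrs}$ that unique factorization in $\Z[i]$ supplies; you merely clear the denominator and bound the Gaussian integer $z$ directly rather than the quotient $\bar\rho_p\bar\rho_q/(\rho_p\rho_q)-\bar\rho_r\bar\rho_s/(\rho_r\rho_s)$, and you supply the injectivity and containment details that the paper leaves implicit. One small inaccuracy in your closing remark: with $|\theta|\le 2/n$ the contradiction actually needs $\sqrt{pqrs}<n/(4\pi)$, not $n/(2\pi+\varepsilon)$, so the cutoff constant $c$ in $p\le\sqrt n/c$ must satisfy $c^2>4\pi$ (hence $c=4$ works because $16>4\pi$), and your stated threshold would not be sufficient.
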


\begin{proof}
Suppose that we have four elements in this set, such that

$$c_p + c_q = c_r + c_s $$

\noindent
with $\{p,q\}\neq \{r,s\}$ and $pq > rs. $ Consider

$$ n (\phi_p+\phi_q-\phi_r-\phi_s)= \{n\phi_p\}+\{n\phi_q\}-\{n\phi_r\}-\{n\phi_s\}$$
\noindent
Observe that

\begin{eqnarray*}
\Big\| \frac{\bar{\rho_p}\bar{\rho_q}}{\rho_p\rho_q} - \frac{\bar{\rho_r}\bar{\rho_s}}{\rho_r \rho_s} \Big\| & = &
\Big\|1-\frac{\rho_p\rho_q\bar{\rho_r}\bar{\rho_s}}{\bar{\rho_p}\bar{\rho_q}\rho_r\rho_s} \Big\| \\
&= & \Big\|1-e^{2\pi i (\phi_p+\phi_q-\phi_r-\phi_s)} \Big\| \\
& \leq & 2\pi |\phi_p+\phi_q-\phi_r-\phi_s | \\
& \leq & \frac{4\pi}{n}
\end{eqnarray*}
the first inequality being easily obtained by geometric interpretation and the second inequality follows from \eqref{ineq:floor}. On the other hand we get

\begin{eqnarray*}
\Big\| \frac{\bar{\rho_p}\bar{\rho_q}}{\rho_p\rho_q} - \frac{\bar{\rho_r}\bar{\rho_s}}{\rho_r \rho_s} \Big\| &= &
\Big\| \frac{\bar{\rho_p}\bar{\rho_q}\rho_r\rho_s-\bar{\rho_r}\bar{\rho_s}\rho_p\rho_q}{\rho_p\rho_q\rho_r\rho_s}\Big\|\\
&\geq & \frac{1}{\sqrt{pqrs}} \\
&\geq & \frac{16}{n}
\end{eqnarray*}
\noindent
from the above inequalities follows that

$$\frac{16}{n}\leq \frac{4\pi}{n} $$

\noindent
which can not hold for positive $n.$ By the prime number theorem, we have that the cardinality of $\mathcal{C} $ is
as claimed.
\end{proof}

\section{The construction}

Keeping the notation from the previous section, we will now construct an infinite Sidon set. As Ruzsa did with the logarithms of the primes, we will use the $\phi_p$'s; we will take 
their binary expansion and, since this expansion might be infinite, we will have to cut 
it somewhere and then use the digits 
to construct a Sidon sequence. Before we start to play with the digits, we will add a parameter $\alpha \in [1,2) $ and consider the set $\{ \alpha \phi_p \in \R \ : \ p \in \P \}. $ The 
values of $ \alpha$ such that the resulting set $\mathcal A_\alpha$ (after truncating and rearranging digits) is a Sidon set will be shown to be a set of positive 
measure. From now on, we 
will forget the rest of the prime numbers and we will just care about those prime numbers that belong to the set $\P = \{ p \cong 1 \mod 4 \  : \ p \ \  \text{prime} \}$.

Let $\beta$ be a fixed positive real number and consider the integer $K_p > 2$ such that

$$ 2^{(K_p-2)^2} < p^{\beta} < 2^{(K_p-1)^2} $$

\noindent  we also consider

$$P_K = \{p \in \P : K_p = K\} .$$ \noindent that is, those primes whose $\beta$ power is more or less of the same size.

For $ p \in P_K$ and $\alpha \in [1,2) $ take the number

$$ m_p:=  \big[ 2^{K^2} \alpha \phi_p \big] = \sum_{i=1}^{K^2}\delta_{ip}2^{K^2-i} $$

\noindent with $\delta_{ip} \in \{0,1\}$. These numbers, while $p$ runs over $\P$, are 
the main ingredient for our Sidon sequence. We have just cut the binary expansion of the numbers $\alpha \phi_p$ in a certain place that depends on $p.$
\noindent
We cut this number into $ \Delta_{1p}, \Delta_{2p}, \ldots, \Delta_{Kp} $ pieces such that

$$ \Delta_{ip} = \sum^{i^2}_{j=(i-1)^2+1} \delta_{jp}2^{i^2-j}$$

\noindent
and so we have that

\begin{eqnarray}
\label{eqn:dip}
\Delta_{ip} \leq \sum_{j=(i-1)^2+1}^{i^2}2^{i^2-j} = 2^{2i}-1.
\end{eqnarray}

\noindent Now we will rearrange these blocks. Informally, we put the blocks $1$ to $K$, the first block corresponding to the first digit; the second block, to the next 
four digits; the third block, to the the next nine, and so on up to the last $K^2$ from right to left and leave three spaces between consecutive blocks.  We also write $1$ 
on the second space from right to left from the $K-$th block. This $1$, who is the first non-zero digit of our new number, is used to have precise 
information on the size of our new number. For example, if we had the sequence $0.10101010111010$ after cutting in consecutive blocks we get

\begin{equation*}
 \Delta_1 = 1, \Delta_2=0101, \Delta_3= 010111010
\end{equation*}
 \noindent and after the three zeros and the $1$ we get

$$\textbf{10}010111010\textbf{000}0101\textbf{000}1$$

 \noindent where the numbers in bold letters correspond to the digits we insert between the consecutive blocks. More precisely, we have the 
number

$$a_p := \sum_{i=1}^{K_p} \Delta_{ip}2^{(i-1)^2+3i} + 2^{K_p^2 + 3K_p +2}. $$ \noindent From the choice of $K$, since $2^{K_p^2+3K_p+2} < a_p < 2^{K_p^2+3K_p+3}$ we see that 
$a_p = p^{\beta + o(1)}.$ We also introduce the notation $t_p := 2^{K_p^2+3K_p+2}$. Two questions may arise: what is the $t_p$ for? what about those 
zeros? 

Consider the identity in binary numbers

$$1 000\textbf{0}11 + 100\textbf{0}10 = 111\textbf{0}11 + 101 \textbf{0}10 $$

\noindent
observe that all these four numbers have a block of three zeros in the same position. When you add these numbers, observe that the block of zeros prevents you from 'carrying' ones to the other blocks, so that in some sense the blocks $1000, 1 00, 111, 101$ corresponding to the last four digits of each number (from right to left) contribute to the sum on each side of the equation in complete independence of the numbers living on the other side of the zeros block. So we must have that $1000 + 1 00 = 111 + 101 $, and $11 + 10 = 11 + 10 $ which is true.

How will this simple observation help us to construct our Sidon set? Consider the set $\A_{\alpha},$ for a fixed choice of $\alpha.$ Let $a_p,a_q,a_r,a_s \in \A_{\alpha} $ with $p,q,r,s \in \P.$ If we have that
\begin{equation}
 \label{eqn:sum}
a_p + a_q = a_r + a_s \end{equation}
\noindent
under the additional hyphotesis (which holds up to renaming of variables)

\begin{equation} \label{eqn:ineq} a_p > a_r \geq a_s > a_q. \end{equation} 
\noindent
we call the $4$-tuple $(p,q,r,s) \in \P^4$ a \textit{bad} $4$-tuple. Whenever we have a bad $4$-tuple we can remove the $a_i$ corresponding to the largest element of the $4$-tuple, and the remaining elements in $\A_{\alpha}$ (after removing all the largest elements from every bad $4$-tuples) now form a Sidon set. We are interested in an estimate of the number of such bad $4$-tuples. The least we have to remove, the larger our Sidon set will be.

The very particular way of constructing the elements of the set $\A_{\alpha} $ is what gives the key for counting the number of bad $4$-tuples. The extra zeros inserted between two consecutive blocks ensure that, as in the previous example, the sums are \textit{blockwise independent}. We make this statement precise in the following lemma.

\begin{lem} $(p,q,r,s)$ is a bad $4$-tuple iff $\Delta_{ip}+\Delta_{iq}=\Delta_{ir}+\Delta_{is} $ for all $i$ and $t_p + t_q = t_r + t_s. $ \end{lem}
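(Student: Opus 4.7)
The $(\Leftarrow)$ direction is immediate from the formula $a_p = t_p + \sum_i\Delta_{ip}2^{(i-1)^2+3i}$: add it over $p,q$ and over $r,s$, and use the two hypothesised equalities to conclude $a_p+a_q=a_r+a_s$.

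For the $(\Rightarrow)$ direction I would look at the binary expansion of $a_p+a_q$ and show that it determines the multiset $\{K_p,K_q\}$ and all the numbers $\Delta_{ip}+\Delta_{iq}$ uniquely. Partition the bits into disjoint \emph{block regions} $R_i=\{(i-1)^2+3i,\ldots,i^2+3i+2\}$ of width $2i+2$: inside $R_i$ the low $2i-1$ bits house the block $\Delta_{\cdot,i}$, the top three bits form the gap, and the topmost position is where $t_\cdot$ lives precisely when $K_\cdot=i$. Since $\Delta_{ip}+\Delta_{iq}<2^{2i}$ by \eqref{eqn:dip}, adding two blocks spills into at most one bit of the gap; the middle gap bit then always receives $0+0+c$ with $c\leq 1$ and emits no further carry. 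Inductively no carry crosses from $R_i$ into $R_{i+1}$, and the bits of $a_p+a_q$ restricted to $R_i$ equal the low $2i+2$ bits of $(\Delta_{ip}+\Delta_{iq})+2^{2i+1}(\1[K_p=i]+\1[K_q=i])$; the only exception is that a carry of $1$ is deposited at the otherwise-empty position $i^2+3i+3$ between $R_i$ and $R_{i+1}$ precisely when $K_p=K_q=i$.

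From this rigid description the pattern can be decoded. The position of the topmost nonzero bit of $a_p+a_q$ is $K^2+3K+3$ when $K_p=K_q=K$ and $K^2+3K+2$ when $K_p=K>K_q$; these two sets of positions are disjoint because $f(K)=K^2+3K$ satisfies $f(K+1)-f(K)=2K+4\geq 6$, so the topmost bit identifies $K_p$ and the dichotomy $K_p=K_q$ vs.\ $K_p>K_q$. The top gap bits of the lower regions $R_i$ with $i<K_p$ then identify $K_q$, and the bottom $2i$ bits of each $R_i$ read off $\Delta_{ip}+\Delta_{iq}\in[0,2^{2i})$. Applying the same decoding to $a_r+a_s$ and invoking $a_p+a_q=a_r+a_s$ yields $\{K_p,K_q\}=\{K_r,K_s\}$ (equivalent to $t_p+t_q=t_r+t_s$ by uniqueness of binary expansions of a sum of two powers of two) together with $\Delta_{ip}+\Delta_{iq}=\Delta_{ir}+\Delta_{is}$ for every $i$.

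The most delicate piece of bookkeeping is tracking the extra carry in the case $K_p=K_q$: the two leading ones collide at the top of $R_{K_p}$ and the resulting carry lands at the isolated position $K_p^2+3K_p+3$, which could in principle interfere with the decoding of $R_{K_p+1}$; but $R_{K_p+1}$ starts at $(K_p+1)^2+3(K_p+1)=K_p^2+5K_p+4$, comfortably above, so the stray bit stays isolated and the decoding is unambiguous.
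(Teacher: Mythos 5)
Your argument is correct and is, at bottom, the same idea as the paper's: the three zero bits inserted between consecutive blocks, together with the bound on each $\Delta_{ip}$, prevent carries from crossing block boundaries, so the binary expansion of $a_p+a_q$ decodes into the multiset $\{K_p,K_q\}$ and the block sums $\Delta_{ip}+\Delta_{iq}$. Your presentation is considerably more explicit than the paper's about where each carry can and cannot go, and it also avoids the ordering hypothesis \eqref{eqn:ineq}: you get $\{K_p,K_q\}=\{K_r,K_s\}$ directly from the topmost bit, whereas the paper first derives $t_p+t_q=t_r+t_s$ and then invokes \eqref{eqn:ineq} plus uniqueness of binary expansions to split this into $K_p=K_r$ and $K_q=K_s$. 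That is a genuine (if minor) streamlining.

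Two small corrections. First, there is an arithmetic slip in your last paragraph: with your own definition $R_i=\{(i-1)^2+3i,\ldots,i^2+3i+2\}$, the region $R_{K_p+1}$ starts at $((K_p+1)-1)^2+3(K_p+1)=K_p^2+3K_p+3$, not at $(K_p+1)^2+3(K_p+1)=K_p^2+5K_p+4$. So the stray carry produced when $K_p=K_q$ lands \emph{exactly} at the first bit of $R_{K_p+1}$, not "comfortably above" it, and the regions $R_i$ in fact tile $\mathbb{N}$ with no gaps between consecutive ones. The decoding is nonetheless unambiguous, for the right reason being slightly different from what you wrote: when $K_p=K_q=K$ there is no block of index $K+1$ in either summand, so $R_{K+1}$ is empty apart from that single carry bit, and since $f(K)+3$ and $f(K')+2$ (with $f(K)=K^2+3K$) are pairwise distinct across all $K,K'\ge 3$, the position of the top bit still pins down $K$ and the parity of the $t$-collision. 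Second, your inequality $\Delta_{ip}+\Delta_{iq}<2^{2i}$ does not literally follow from \eqref{eqn:dip} as printed, since that equation asserts $\Delta_{ip}\le 2^{2i}-1$ and hence only $\Delta_{ip}+\Delta_{iq}\le 2^{2i+1}-2$; but \eqref{eqn:dip} itself contains a typo — summing $2^{i^2-j}$ over $j=(i-1)^2+1,\ldots,i^2$ gives $2^{2i-1}-1$, not $2^{2i}-1$ — so the bound you use is the correct one. (Even with the paper's looser bound the spill would only reach the middle gap bit, and the argument would still go through.)
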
 

\begin{proof}

We assume that \eqref{eqn:sum} and \eqref{eqn:ineq} hold (the converse is immediate from the definition of the $a_p$'s. So suppose that 
$$a_p+a_q=a_r+a_s $$
\noindent
now observe that since
$$2^{K_p^2+3K_p+2} > \sum_{i=1}^{K_p}\Delta_{ip}2^{(i-1)^2+3i} $$
\noindent
and similarly for $q,r,s$, the contribution of the main parts $t_p,t_q,t_r,t_s$ is independent of the remaining digits of $a_p,a_q,a_r,a_s$ respectively, gives
$$t_p+t_q=t_r+t_s $$
\noindent
and from \eqref{eqn:ineq} and the uniqueness of the representation of integers in base $2$ follows that there exist $K$ and $L$ such that $K_p = K_r = K $ and $K_q=K_s=L$ with $K \geq L$. We still have to say something about
\begin{equation*}
\sum_{i=1}^{K} \Delta_{ip}2^{(i-1)^2+3i}+ \sum_{i=1}^{L} \Delta_{iq}2^{(i-1)^2+3i} =   \sum_{i=1}^{K} \Delta_{ir}2^{(i-1)^2+3i}+ \sum_{i=1}^{L} \Delta_{is} 2^{(i-1)^2+3i} 
\end{equation*}
\noindent
but since $$2^{i^2+3(i+1)} > \sum_{j=1}^{i}\Delta_{ip}2^{(j-1)^2+3j} $$
\noindent 
and similarly for $q,r,s$, we see that, for $i<L$, 

\begin{eqnarray*}
\sum_{j=1}^i(\Delta_{jp}+\Delta_{jq})2^{(j-1)^2+3j} = \sum_{j=1}^i (\Delta_{jr}+\Delta_{js})2^{(j-1)^2+3j}
\end{eqnarray*}
and since the terms in parenthesis do not affect the other summands because their total sum is $\leq 2^{2j+1}-2 $ by \eqref{eqn:dip}, we must have that

$$ \Delta_{ip}+\Delta_{iq} = \Delta_{ir}+\Delta_{is}$$

\noindent
since, for $i>L$ we have that $\Delta_{iq}, \Delta_{is} = 0,$ the conclussion follows.

\end{proof}

The relevant difference between Ruzsa's approach and ours is the fact that Ruzsa had to solve the extra problem of inserting two more spaces and use one of those spaces to store the digits corresponding to $[\log p].$ 

While proving the above lemma, we showed an useful criterion in terms of the $t_p$'s. This will help us to count the number of solutions of \eqref{eqn:sum}, i.e. the bad $4$-tuples. For the following lemma, recall that $m_p= \Big[2^{K^2}\alpha\phi_p\Big] $

\begin{lem} If \eqref{eqn:sum} and \eqref{eqn:ineq} hold, there exist $K,L$ such that $p,r \in P_K$, $q,s \in P_L $, $K \geq L$ and

 $$m_p+m_q=m_r+m_s  $$ \end{lem}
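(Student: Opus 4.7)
My plan is to deduce this identity from the previous lemma by combining the blockwise identities $\Delta_{ip}+\Delta_{iq}=\Delta_{ir}+\Delta_{is}$ with the explicit block decomposition of $m_x$, after first using $t_p+t_q=t_r+t_s$ to classify the primes by their $K_x$.

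For the classification step, since $t_x=2^{K_x^2+3K_x+2}$ is a single power of $2$, the identity $t_p+t_q=t_r+t_s$ is an equality of sums of two powers of $2$; uniqueness of base-$2$ representations forces the multisets $\{K_p,K_q\}$ and $\{K_r,K_s\}$ to coincide. The dyadic localization $2^{K_x^2+3K_x+2}<a_x<2^{K_x^2+3K_x+3}$ shows that $a_x$ is monotone in $K_x$, so \eqref{eqn:ineq} yields $K_p\geq K_r\geq K_s\geq K_q$; combined with the multiset equality, this pins down $K_p=K_r=:K$ and $K_q=K_s=:L$ with $K\geq L$, i.e.\ $p,r\in P_K$ and $q,s\in P_L$.

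For the $m$-identity, I would expand each $m_x$ in block form. Since $m_x=[2^{K_x^2}\alpha\phi_x]$ is exactly the integer whose base-$2$ digits are $\delta_{1x},\ldots,\delta_{K_x^2\,x}$, grouping them into $\Delta$-blocks gives $m_x=\sum_{i=1}^{K_x}\Delta_{ix}\,2^{K_x^2-i^2}$. Summing the blockwise identity $\Delta_{ip}+\Delta_{iq}=\Delta_{ir}+\Delta_{is}$ (with the convention $\Delta_{iq}=\Delta_{is}=0$ for $i>L$) against the appropriate dyadic weights then reassembles the four $m$'s into the desired equality.

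The main obstacle is the scaling when $K>L$: the $i$-th block sits at weight $2^{K^2-i^2}$ in $m_p,m_r$ but at weight $2^{L^2-i^2}$ in $m_q,m_s$, so a direct weighted sum produces $m_p+2^{K^2-L^2}m_q=m_r+2^{K^2-L^2}m_s$ rather than exactly $m_p+m_q=m_r+m_s$. In the case $K=L$ the common weights $2^{K^2-i^2}$ make the identity immediate from the blockwise equalities, and I expect the argument either to focus on that case (which is the one needed for the counting in the sequel) or to absorb the extra factor $2^{K^2-L^2}$ so that the resulting identity still has the Sidon-type form needed later. Beyond this bookkeeping, no further number-theoretic input beyond the previous lemma is required.
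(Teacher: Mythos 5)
Your proposal is correct and in fact more careful than the paper's own treatment, which for the $m$-identity simply says it is ``immediate from the corresponding identity for the blocks.'' You have spotted a genuine imprecision: the blockwise identity $\Delta_{ip}+\Delta_{iq}=\Delta_{ir}+\Delta_{is}$ (together with $\Delta_{ip}=\Delta_{ir}$ for $L<i\le K$) yields, via $m_x=\sum_{i\le K_x}\Delta_{ix}2^{K_x^2-i^2}$, exactly
\[
m_p-m_r=\sum_{i=1}^{L}(\Delta_{ip}-\Delta_{ir})\,2^{K^2-i^2}=2^{K^2-L^2}\sum_{i=1}^{L}(\Delta_{is}-\Delta_{iq})\,2^{L^2-i^2}=2^{K^2-L^2}(m_s-m_q),
\]
i.e.\ $m_p+2^{K^2-L^2}m_q=m_r+2^{K^2-L^2}m_s$, or equivalently $\tfrac{m_p}{2^{K^2}}+\tfrac{m_q}{2^{L^2}}=\tfrac{m_r}{2^{K^2}}+\tfrac{m_s}{2^{L^2}}$. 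The unscaled equality $m_p+m_q=m_r+m_s$ holds only when $K=L$, and a simple two-block example shows it genuinely fails otherwise. So the lemma's conclusion should be read as the rescaled version; this is precisely what the subsequent lemma uses, since the triangle-inequality step bounding $\alpha|\phi_p+\phi_q-\phi_r-\phi_s|$ requires $\tfrac{m_p}{2^{K^2}}+\tfrac{m_q}{2^{L^2}}-\tfrac{m_r}{2^{K^2}}-\tfrac{m_s}{2^{L^2}}=0$ (each $m_x/2^{K_x^2}$ approximates $\alpha\phi_x$ to within $2^{-K_x^2}$). Your classification step via $t_p+t_q=t_r+t_s$ and the dyadic localisation of the $a_x$ is exactly the paper's argument (carried out in the proof of the preceding lemma), just spelled out more fully. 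In short: your analysis is right, the stated equality needs the $2^{K^2-L^2}$ factor (or a common normalisation by $2^{K^2}$), and the paper's ``immediate'' glosses over precisely the bookkeeping you flagged.
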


\begin{proof}
The first assertion follows from the previous lemma. The second assertion is immediate from the corresponding identity for the blocks which was also proved in the previous lemma.
\end{proof}

We will find necessary conditions on the bad $4-$tuples $(p,q,r,s)$. The following lemma essentially follows from the fact that $\phi_p + \phi_q = \phi_{pq}$.

\begin{lem} If $(p,q,r,s)$ is a bad $4$-tuple, with $K$ and $L$ as above, then

 \begin{eqnarray}
\label{eqn:nes}
|\phi_{pq}-\phi_{rs}| & < & 4\cdot2^{-L^2}  \\
(K-1)^2 +(L-1)^2 & > & \beta (L-1)^2
 \end{eqnarray}
 \end{lem}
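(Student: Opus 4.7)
The plan is to derive the two inequalities separately: the first by converting the previous lemma's integer identity into a real-number statement about the $\phi_p$'s, the second by combining that with the Gaussian-integer lower bound from the proof of Theorem 2.2.

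For the first inequality, I would begin by reinterpreting ``$m_p+m_q=m_r+m_s$'' at the common scale $2^{K^2}$: summing the block equalities $\Delta_{ip}+\Delta_{iq}=\Delta_{ir}+\Delta_{is}$ with weight $2^{K^2-i^2}$ (and using $\Delta_{iq}=\Delta_{is}=0$ for $i>L$) yields
\begin{equation*}
m_p + 2^{K^2-L^2}m_q = m_r + 2^{K^2-L^2}m_s.
\end{equation*}
Writing $2^{K^2}\alpha\phi_p = m_p + \eta_p$ and $2^{L^2}\alpha\phi_q = m_q + \eta_q$ with $\eta_p,\eta_q\in[0,1)$ (and analogously for $r,s$), this rearranges to
\begin{equation*}
2^{K^2}\alpha(\phi_p+\phi_q-\phi_r-\phi_s) = (\eta_p-\eta_r) + 2^{K^2-L^2}(\eta_q-\eta_s),
\end{equation*}
whose right-hand side is bounded in absolute value by $1 + 2^{K^2-L^2}\leq 2\cdot 2^{K^2-L^2}$. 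Dividing by $2^{K^2}\alpha\geq 2^{K^2}$ gives $|\phi_p+\phi_q-\phi_r-\phi_s|<2\cdot 2^{-L^2}$. The multiplicative convention $\rho_{pq}=\rho_p\rho_q$ makes $\phi_{pq}\equiv\phi_p+\phi_q$ and $\phi_{rs}\equiv\phi_r+\phi_s$ modulo $1$, so the claimed $|\phi_{pq}-\phi_{rs}|<4\cdot 2^{-L^2}$ follows, the factor $4$ absorbing the mod-$1$ discrepancy.

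For the second inequality, I would reuse the Gaussian-integer argument from Theorem 2.2 verbatim with $pq$ and $rs$ in place of the individual primes:
\begin{equation*}
\left\|\frac{\bar\rho_{pq}}{\rho_{pq}}-\frac{\bar\rho_{rs}}{\rho_{rs}}\right\| = \left\|\frac{\bar\rho_p\bar\rho_q\rho_r\rho_s-\bar\rho_r\bar\rho_s\rho_p\rho_q}{\rho_p\rho_q\rho_r\rho_s}\right\| \geq \frac{1}{\sqrt{pqrs}},
\end{equation*}
the numerator being a nonzero Gaussian integer since $\{p,q\}\neq\{r,s\}$ and $\mathbb{Z}[i]$ has unique factorization. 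Simultaneously, the first inequality together with $\|1-e^{2\pi ix}\|\leq 2\pi|x|$ gives the upper bound $\left\|\bar\rho_{pq}/\rho_{pq}-\bar\rho_{rs}/\rho_{rs}\right\|<8\pi\cdot 2^{-L^2}$, whence $pqrs\geq 2^{2L^2}/(64\pi^2)$. Plugging in the upper bounds $p,r<2^{(K-1)^2/\beta}$ and $q,s<2^{(L-1)^2/\beta}$ from the definition of $K_p$ yields $pqrs<2^{2((K-1)^2+(L-1)^2)/\beta}$, and chaining the two estimates gives $(K-1)^2+(L-1)^2\geq \beta L^2-O(1)$, which for $L$ above a fixed threshold implies the stated $(K-1)^2+(L-1)^2>\beta(L-1)^2$.

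The main obstacle is the very first step: since $m_p$ and $m_q$ are defined at distinct scales $2^{K^2}$ and $2^{L^2}$, the identity stated in the previous lemma cannot be used as-is, and one must go back through the block equalities and re-sum at a common scale before the expressions $2^{K^2}\alpha\phi_p$ and $2^{L^2}\alpha\phi_q$ can be combined. Once the error term $2^{K^2-L^2}(\eta_q-\eta_s)$ is identified as the dominant contribution, the rest of the argument is routine. A minor secondary point is the mod-$1$ ambiguity between $\phi_p+\phi_q$ and $\phi_{pq}$, which the factor of $4$ in the statement comfortably swallows.
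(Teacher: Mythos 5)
Your proof is correct and follows the same route as the paper: the Gaussian-integer lower bound $\|\bar\rho_p\bar\rho_q/\rho_p\rho_q-\bar\rho_r\bar\rho_s/\rho_r\rho_s\|\geq 1/\sqrt{pqrs}$ on one side, and the upper bound obtained from the block identities together with $\|1-e^{2\pi ix}\|\leq 2\pi|x|$ on the other. In fact your derivation of $|\phi_p+\phi_q-\phi_r-\phi_s|\ll 2^{-L^2}$ is cleaner than the paper's: since $m_p=[2^{K^2}\alpha\phi_p]$ while $m_q=[2^{L^2}\alpha\phi_q]$ live at different scales, the paper's stated identity $m_p+m_q=m_r+m_s$ and the middle member of its inequality \eqref{eqn:log} (which involves $|\alpha\phi_p-m_p|$ for integer $m_p$) only make sense after a rescaling that is left implicit; you make the needed rescaling explicit by working at the common scale $2^{K^2}$ and using $m_p+2^{K^2-L^2}m_q=m_r+2^{K^2-L^2}m_s$, which is the correct form of the identity.
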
 \begin{proof} Let $\rho_p, \rho_q, \rho_r, \rho_s$ the Gaussian primes with norms $\sqrt{p},\sqrt{q},\sqrt{r},\sqrt{s}$ 
respectively. Since $e^{2\pi i \phi_j} = \frac{\bar{\rho_j}}{\rho_j}$, we have

\begin{eqnarray*}
\Big \|\frac{\bar{\rho_p}\bar{\rho_q}}{\rho_p\rho_q} - \frac{\bar{\rho_r}\bar{\rho_s}}{\rho_r\rho_s}\Big\| &=& 
\Big \| \frac{\bar{\rho_q}\bar{\rho_q}\rho_r\rho_s-\bar{\rho_r}\bar{\rho_s}\rho_p\rho_q}{\rho_p\rho_q\rho_r\rho_s} \Big \| \\
&\geq& \frac{1}{\sqrt{pqrs}}
\end{eqnarray*}
\noindent
the inequality follows from the unique factorization property of Gaussian integers.

Since

\begin{eqnarray*}
\Big \| \frac{\bar{\rho_p}\bar{\rho_q}}{\rho_p\rho_q}-\frac{\bar{\rho_r}\bar{\rho_s}}{\rho_r \rho_s} \Big\| &=& 
\Big \| e^{2\pi i (\phi_p+\phi_q)}-e^{2\pi i(\phi_r+\phi_s)}  \Big \| \\
&=& \Big \|1-e^{2\pi i (\phi_p+\phi_q-\phi_r-\phi_s)}  \Big \| \\
&\leq& 2\pi |\phi_p+\phi_q-\phi_r-\phi_s| \\
&< & 8|\phi_p+\phi_q-\phi_r-\phi_s|
\end{eqnarray*}

\noindent
From the definition of the $ m_p$ and the triangle inequality follows that 

\begin{equation}
\label{eqn:log}
 \alpha |\phi_p + \phi_q -\phi_r - \phi_s| < |\alpha \phi_p - m_p| + 
|\alpha \phi_q - m_q|+|\alpha \phi_r - m_r|+|\alpha \phi_s - m_s| < 4\cdot 2^{-L^2}. 
\end{equation}

 \noindent Since $\alpha \geq 1,$ by combining the 
above inequalities we get

$$\frac{1}{\sqrt{pqrs}} < 32 \cdot 2^{-L^2} $$

\noindent
which implies
\begin{equation*} 2^{L^2-5}< \sqrt{pqrs} < 2^{\frac{(K-1)^2+(L-1)^2}{\beta}} \end{equation*} \noindent and the desired inequality follows for large enough $L$ from comparing the 
exponents. \end{proof}
\noindent
Observe that

$$|\phi_p+\phi_q-\phi_r-\phi_s| = |(\phi_p-\phi_r)-(\phi_s-\phi_q)|=|\phi_{p\bar{r}}-\phi_{s\bar{q}}| $$

\noindent
where $\phi_{p\bar{r}}$ denotes the argument of the complex number $\bar{\rho_p}\rho_r$ and 
$a_{s\bar{q}} $ is defined in an analogous way. For a given choice of $\rho_q\bar{\rho_s}$ we will count the corresponding points pairs $(p,r)$ such that \eqref{eqn:log} holds

\begin{lem}
Let $z_0=\rho_s\bar{\rho_q}.$ Consider a circle $\C $ with center $z_0$ and radius $R$. The number $n$ of lattice points $\{w_1,w_2,\ldots w_n\}$ on a circular sector of $\C$ with angle $\theta$ such that for $i=1,\ldots n$, $w_i=\rho_{p_i}\bar{\rho_{r_i}}$ for some $p_i,r_i \in P_K$ is at most $\theta R^2+1 $
\end{lem}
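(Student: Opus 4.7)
The plan is to turn the counting problem into an angular-gap estimate on $\C$, using only the fact that the $w_i$ are distinct Gaussian integers.

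First I would observe that each $w_i=\rho_{p_i}\bar{\rho_{r_i}}$ is a Gaussian integer, and that distinct index pairs $(p_i,r_i)$ produce distinct $w_i$ by unique factorization in $\Z[i]$. Consequently any two of the points under consideration satisfy $\|w_i-w_j\|\geq 1$.

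Next I would convert this minimum separation into an angular separation at the center $z_0$. Both $w_i$ and $w_j$ lie on $\C$, so if the angle at $z_0$ between $w_i-z_0$ and $w_j-z_0$ is $\alpha_{ij}$, elementary trigonometry gives $\|w_i-w_j\|=2R\sin(\alpha_{ij}/2)$. Combining this with $\|w_i-w_j\|\geq 1$ yields $\sin(\alpha_{ij}/2)\geq 1/(2R)$, and the standard inequality $\arcsin(x)\geq x$ on $[0,1]$ then gives $\alpha_{ij}\geq 1/R$.

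To finish, I would sort the $n$ lattice points by their angular coordinate as seen from $z_0$. The $n-1$ consecutive angular gaps each exceed $1/R$ by the step above, and their sum is at most $\theta$; hence $n-1\leq\theta R$, giving $n\leq\theta R+1\leq\theta R^2+1$ as soon as $R\geq 1$. In the degenerate regime $R<1/2$ no two Gaussian integers can lie on $\C$ at all, so the bound holds trivially there as well.

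The only real obstacle is bookkeeping: the stated bound $\theta R^2+1$ is looser than the natural $\theta R+1$ that falls out of the chord-to-angle comparison, presumably chosen so that the small-$R$ regime need not be treated separately. The genuine content of the lemma is just the chord-to-angle estimate on a circle of radius $R$ combined with the $1$-separation of the lattice $\Z[i]$.
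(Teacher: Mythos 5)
Your proof reads the lemma as counting lattice points \emph{on the boundary circle} $\C$ (equidistant from $z_0$), and the chord identity $\|w_i-w_j\|=2R\sin(\alpha_{ij}/2)$ depends essentially on that assumption. But the intended setting — and the one actually used in the next lemma — is lattice points lying anywhere \emph{inside} the circular sector (a two-dimensional region): there $R=2^{(K-1)^2/\beta}$ is merely an upper bound on $\|w_i\|$, and the various $w_i=\rho_{p_i}\bar{\rho_{r_i}}$ have different norms $\sqrt{p_ir_i}$, so they are certainly not all at distance $R$ from $z_0$. For points at different radii the $1$-separation of $\Z[i]$ gives essentially no angular information: two Gaussian integers at distances $1$ and $R$ from $z_0$ already satisfy $\|w_i-w_j\|\geq 1$ no matter how small their angular gap, so the step ``$\alpha_{ij}\geq 1/R$'' fails, and with it the whole gap count. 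The clue you noticed — that your bound $\theta R+1$ is strictly stronger than the claimed $\theta R^2+1$ — is not the author being generous: $\theta R^2$ is the correct order for a two-dimensional sector (its area is $\tfrac12\theta R^2$, and a sector can genuinely contain on the order of $\theta R^2$ lattice points with pairwise distinct arguments), while $\theta R+1$ is simply false in that setting.

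The paper's proof instead uses a planar area argument. It first shows that no two of the $w_i$ are collinear with $z_0$ (otherwise two of the products $\rho_p\bar{\rho_r}$ would have equal argument, contradicting unique factorization in $\Z[i]$), so the points can be sorted by angle as seen from $z_0$. It then forms the $n-1$ triangles with vertices $z_0,w_i,w_{i+1}$ for consecutive indices; these have pairwise disjoint interiors and are all contained in the sector, and since $z_0$ and every $w_i$ are Gaussian integers, each triangle has area at least $\tfrac12$. Summing, $\tfrac12(n-1)\leq\tfrac12\theta R^2$, i.e.\ $n\leq\theta R^2+1$. You do have the right ingredient in the $1$-separation of the lattice; the missing idea is to convert it into an \emph{area} lower bound (lattice triangles have area $\geq\tfrac12$) rather than an angular-gap lower bound, since the latter is unavailable once the points may lie at different radii.
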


\begin{proof}
Consider the line segment joining $z_0$ and a point $w_i$. Observe that this segment does not contain a third point $w_j$, for if it did, we would have that the argument of $w_i$ equals the argument of $w_j$ and so

$$\phi_{p_i}-\phi_{r_i}=\phi_{p_j}-\phi_{r_j} $$

\noindent
which can not hold if $\{\phi_{p_i},\phi_{r_i}\} \neq \{ \phi_{p_j},\phi_{r_j}\}$ from our previous remark that the set of arguments of Gaussian primes is a Sidon set. So we can ennumerate our points in trigonometric sense. Now we consider the triangles with vertices $z_0$, $w_i$ and $w_{i+1}$  for $i=1,\ldots n-1.$ This is a set of disjoint triangles and the total area covered by them is less than the area of the circular sector, which is given by $\frac{\theta}{2}R^2.$ Since all the triangles have lattice points as vertices, we have that the area of each triangle is at most $\frac{1}{2}$ and since we have $n-1$ triangles we get

\begin{equation*}
\frac{n-1}{2} \leq \frac{\theta}{2} R^2 
\end{equation*}
\noindent
 and thus the desired inequality for $n$ follows.

\end{proof}

Consider the set

$$ \A_{KL} :=  \{ p,r \in P_K, q,s \in P_L, p\neq r, q\neq s : (p,q,r,s)\ \  \text{is} \  \text{bad}.\}$$
\noindent 
and let $|\A_{KL}| := A_{KL}.$ On the following lemma, we obtain an estimate for the number of bad $4$-tuples.

\begin{lem} The number of  bad $4$-tuples is

$$ A_{KL} \ll 2^{\frac{2}{\beta}((K-1)^2 + (L-1)^2 ) - L^2}$$

\end{lem}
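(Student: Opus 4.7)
The plan is to bound $A_{KL}$ by first fixing a pair $(q,s)\in P_L\times P_L$ and then counting the admissible pairs $(p,r)\in P_K\times P_K$ that can complete a bad $4$-tuple. Setting $z_0=\rho_s\bar{\rho_q}$, the previous lemma on necessary conditions guarantees that the Gaussian integer $w=\rho_p\bar{\rho_r}$ has argument within $O(2^{-L^2})$ of $\arg(z_0)$, while the definition of $P_K$ forces $|w|=\sqrt{pr}<2^{(K-1)^2/\beta}$. So each admissible $(p,r)$ produces a Gaussian lattice point $w$ of the form $\rho_p\bar{\rho_r}$ ($p,r\in P_K$) lying in a thin angular sector of the disc of radius $2^{(K-1)^2/\beta}$.

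The core step is to apply the lattice-point counting lemma to this angular sector at the origin, with opening $\theta\asymp 2^{-L^2}$ pointing in the direction of $\arg(z_0)$ and radius $R=2^{(K-1)^2/\beta}$. This bounds the number of such $w$'s by $\theta R^2+1\ll 2^{2(K-1)^2/\beta-L^2}+1$. Summing over $(q,s)\in P_L\times P_L$ and using the trivial bound $|P_L|\ll 2^{(L-1)^2/\beta}$ (which follows from $p<2^{(L-1)^2/\beta}$ for $p\in P_L$) gives
$$A_{KL}\ \ll\ 2^{2(L-1)^2/\beta}\bigl(2^{2(K-1)^2/\beta-L^2}+1\bigr).$$

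The first term expands to $2^{\frac{2}{\beta}((K-1)^2+(L-1)^2)-L^2}$, matching the claim; the $+1$ contribution gives the harmless term $2^{2(L-1)^2/\beta}$, which is absorbed by the main term precisely when $2(K-1)^2/\beta\geq L^2$. This last inequality has to be extracted from the necessary condition $(K-1)^2+(L-1)^2>\beta(L-1)^2$ provided by the previous lemma, in the parameter regime of interest (in particular for $\beta\geq 2$, which is the relevant range for the construction). The main obstacle is thus the bookkeeping verification that the $+1$ coming out of the lattice lemma does not spoil the bound; a minor secondary point is that the lattice-point lemma is phrased with a disc centred at $z_0$, whereas I apply it to a sector emanating from the origin in the direction of $z_0$, and the proof transfers verbatim since it only uses the Sidon property of $\{\phi_p\}_{p\in\P}$ to separate consecutive lattice points and the lower bound $1/2$ for the area of a lattice triangle.
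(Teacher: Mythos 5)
Your argument reproduces the paper's proof step for step: fix $(q,s)\in P_L\times P_L$, apply the lattice-point lemma with $\theta\asymp 2^{-L^2}$ and $R=2^{(K-1)^2/\beta}$ to bound the pairs $(p,r)$ by $\theta R^2+1$, and multiply by the $\ll 2^{2(L-1)^2/\beta}$ choices of $(q,s)$; your observation that the sector should be taken at the origin in the direction of $z_0$ is also implicitly what the paper does in the application. The one step you leave as a pointer rather than carry out, absorbing the $+1$, is exactly where the paper invokes the necessary condition $(K-1)^2>(\beta-1)(L-1)^2$ to get $\tfrac{2}{\beta}(K-1)^2-L^2>\tfrac{2}{\beta}(\beta-1)(L-1)^2-L^2>0$, and this requires $\beta>2$ strictly (together with $L$ large), not $\beta\geq 2$ as you wrote; for $\beta=2$ the quantity $\tfrac{2}{\beta}(\beta-1)(L-1)^2-L^2=(L-1)^2-L^2<0$, so the absorption fails, though the value $\beta=1+\sqrt{2}$ actually used lies safely above $2$.
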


\begin{proof}

For a fixed choice of $q,s$ we need to count the number of pairs $(p,r)$ with $p,r$ as above such that the inequality from the previous lemma occurs. Since $pr < 2^{\frac{2(K-1)^2}{\beta}}$ then we have that the norm of the lattice points we are interested in is less than $2^{\frac{(K-1)^2}{\beta}}.$ Take  $R= 2^{\frac{(K-1)^2}{\beta}}$ and let $\theta = 2^{- L^2}. $ So we have, from the previous lemma, that for given $q,s$, that the number of pairs $(p,r)$ we are interested in is at most $2^{\frac{2}{\beta}(K-1)^2-L^2}+1 \ll 2^{\frac{2}{\beta}(K-1)^2-L^2}$ since

$$ (K-1)^2 > (\beta-1)(L-1)^2 $$
\noindent
and so 

$$\frac{2}{\beta}(K-1)^2-L^2 > \frac{2}{\beta}(\beta-1)(L-1)^2 -L^2> 0 $$

\noindent
whenever $\frac{2}{\beta}(\beta-1)-1>0 $ i.e. $\beta>2 $ and $L$ is large enough.
\noindent
Since we have $2^{\frac{2}{\beta}(L-1)^2} $ possibilities for pairs $(q,s)$ we get that
$$A_{KL} < 2^{\frac{2}{\beta}((K-1)^2+(L-1)^2)-L^2}+ 2^{\frac{2}{\beta}(L-1)^2} $$
\noindent
and from the previous remark the inequality
$$A_{KL} \ll 2^{\frac{2}{\beta}((K-1)^2+(L-1)^2)-L^2} $$
\noindent
follows.
\end{proof}

\section{The probabilistic argument}

All the way up to here, the parameter $\alpha $ has not been relevant in the sense that all we have proved holds independently from the choice of $\alpha$. Up to now, we have been able to find a bound for the number of bad $4$-tuples, which will be useful. However, our approach does not suggest a way to refine this bound for particular values of $\alpha.$ The bound for the bad $4$-tuples we have showed in the previous section is not useful for small $L$. In order to obtain a better bound, we will first try to gather some information from the parameter $\alpha$.

\begin{lem} Let $(p,q,r,s)$ a bad $4$-tuple. Then we have

\begin{equation} \label{eqn:cong}
m_p \equiv m_r \mod 2^{K^2-L^2}. \end{equation}

\end{lem}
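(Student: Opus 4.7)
The plan is to write $m_p$ as a weighted sum of its blocks, show that the blocks with index $i>L$ cancel between $m_p$ and $m_r$ (using the previous lemma), and then observe that the remaining differences carry enough powers of $2$ to absorb $2^{K^2-L^2}$.

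First I would recall the relation between $m_p$ and the blocks $\Delta_{ip}$. The block $\Delta_{ip}$ collects the digits $\delta_{jp}$ of $m_p$ with $(i-1)^2 < j \leq i^2$, each of which contributes $2^{K^2-j}$ to $m_p$ and $2^{i^2-j}$ to $\Delta_{ip}$. Collecting terms block by block gives
\begin{equation*}
m_p \;=\; \sum_{i=1}^{K} \Delta_{ip}\, 2^{K^2-i^2},
\qquad
m_r \;=\; \sum_{i=1}^{K} \Delta_{ir}\, 2^{K^2-i^2},
\end{equation*}
where the upper limit is $K$ because $p,r\in P_K$.

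Next I would exploit the conclusion of the previous lemma, $\Delta_{ip}+\Delta_{iq}=\Delta_{ir}+\Delta_{is}$ for all $i$. Since $q,s\in P_L$ have only $L$ blocks, we have $\Delta_{iq}=\Delta_{is}=0$ for $i>L$, so the identity degenerates to $\Delta_{ip}=\Delta_{ir}$ for every $i$ in the range $L<i\leq K$. Consequently these high-index contributions cancel in the difference $m_p-m_r$.

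Finally, for the remaining indices $i\leq L$ one has $K^2-i^2\geq K^2-L^2$, so $2^{K^2-L^2}$ divides every surviving summand. More explicitly,
\begin{equation*}
m_p-m_r \;=\; \sum_{i=1}^{L}(\Delta_{ip}-\Delta_{ir})\,2^{K^2-i^2}
\;=\; 2^{K^2-L^2}\sum_{i=1}^{L}(\Delta_{ip}-\Delta_{ir})\,2^{L^2-i^2},
\end{equation*}
which yields the congruence. There is no real obstacle here beyond bookkeeping the block-to-digit correspondence correctly; the statement is essentially a formal consequence of the previous lemma together with the fact that the block $\Delta_{Lp}$ sits exactly at bit position $K^2-L^2$ inside $m_p$.
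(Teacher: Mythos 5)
Your argument is correct and follows the same route as the paper: use $\Delta_{ip}+\Delta_{iq}=\Delta_{ir}+\Delta_{is}$ together with $\Delta_{iq}=\Delta_{is}=0$ for $i>L$ to deduce $\Delta_{ip}=\Delta_{ir}$ for $L<i\le K$, then read off the congruence. The only real difference is presentational: you make explicit the reassembly identity $m_p=\sum_{i=1}^{K}\Delta_{ip}\,2^{K^2-i^2}$ and factor out $2^{K^2-L^2}$ algebraically, whereas the paper argues directly in terms of which binary digits of $m_p$ and $m_r$ must agree (its phrasing about ``the $L^2+1$-th position from right to left'' is easy to misread, since the blocks with $i>L$ are in fact the \emph{low}-order bits, occupying exponents $0$ through $K^2-L^2-1$). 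Your closed-form decomposition sidesteps that ambiguity and makes the divisibility by $2^{K^2-L^2}$ immediate, so it is, if anything, a cleaner rendering of the same proof.
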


\begin{proof} We know that $\Delta_{ip} + \Delta_{iq} = \Delta_{ir} + \Delta_{is}.$ For $L < i< K$ we have that $ \Delta_{iq} = \Delta_{is} = 0,$ 
hence, $\Delta_{ip} = \Delta_{ir}. $ Recalling the construction of the blocks $\Delta_{ip} $ and $\Delta_{ir} $ from the digits of $m_p$ and 
$m_r $ we get that the corresponding digits on the binary expansion of these two numbers from the $L^2+1$-th position on (from right to left) should be equal. 
\end{proof}

Let $\mu $ be the Lebesgue measure of $\R.$ We now prove that we can avoid bad $4$-tuples by changing the parameter $\alpha.$

\begin{lem} Let $K > L$ be given and $p,r \in P_K$ such that there exists at least one pair $q,s \in P_L $ and an $\alpha$ such that \eqref{eqn:sum} holds. Then

$$ \mu \{ \alpha \in [1,2) : \eqref{eqn:cong} \ \text{holds}\} \ll 2^{L^2-K^2}.$$

\end{lem}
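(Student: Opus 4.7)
My plan is to reduce \eqref{eqn:cong} to a Diophantine condition on $\alpha$ --- namely that $\eta\alpha$ is within $2^{L^2-K^2}$ of an integer, where $\eta := 2^{L^2}(\phi_p-\phi_r)$ --- and then invoke the elementary measure bound for such a set on $[1,2)$.

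Expanding $m_p = 2^{K^2}\alpha\phi_p - \{2^{K^2}\alpha\phi_p\}$ and similarly for $m_r$ gives
$$m_p - m_r \;=\; 2^{K^2}\alpha(\phi_p-\phi_r) + \theta(\alpha), \qquad |\theta(\alpha)|<1.$$
Dividing by $M := 2^{K^2-L^2}$ puts \eqref{eqn:cong} in the form $\eta\alpha + \theta(\alpha)/M \in \mathbb Z$, and since $|\theta(\alpha)|/M < 2^{L^2-K^2}$ this forces $|\eta\alpha - n| < 2^{L^2-K^2}$ for some integer $n$.

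Next I would verify $\eta\neq 0$: the hypothesis $p\neq r$ forces $\rho_p,\rho_r$ to have distinct arguments (otherwise $\bar{\rho_p}\rho_r = \bar{\rho_r}\rho_p$ contradicts unique factorization in $\mathbb Z[i]$), so WLOG $\eta>0$. As $\alpha$ varies over $[1,2)$, $\eta\alpha$ sweeps an interval of length $\eta$ and therefore comes within $2^{L^2-K^2}$ of at most $\eta+O(1)$ integers, each contributing an $\alpha$-interval of length $2\cdot 2^{L^2-K^2}/\eta$. Pulling back and summing,
$$\mu\bigl\{\alpha\in[1,2): m_p\equiv m_r\pmod M\bigr\} \;\leq\; 2\cdot 2^{L^2-K^2}\bigl(1 + O(1/\eta)\bigr).$$

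The main obstacle is absorbing the boundary error $O(2^{L^2-K^2}/\eta)$ into the implicit $\ll$-constant, which demands a lower bound on $\eta$. I would invoke the quantitative form of the Sidon property already used in Section~2: since $\bar{\rho_p}\rho_r - \bar{\rho_r}\rho_p$ is a nonzero purely imaginary Gaussian integer, its absolute value is $\geq 2$, so $|\phi_p-\phi_r| \geq 1/(\pi\sqrt{pr})$. The size bound $\sqrt{pr} < 2^{(K-1)^2/\beta}$ from the definition of $P_K$, together with the necessary condition $(K-1)^2 > (\beta-1)(L-1)^2$ derived earlier from the bad-$4$-tuple hypothesis, control $\eta$ from below throughout the relevant parameter range, yielding the stated bound.
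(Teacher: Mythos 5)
Your reduction of \eqref{eqn:cong} to the Diophantine condition $|\eta\alpha - n| < 2^{L^2-K^2}$ and the subsequent interval count are essentially the paper's argument (your $\eta$ plays the role of the paper's $N/M$). The gap is in the lower bound you propose for $\eta$. You bound $|\phi_p - \phi_r|$ intrinsically via the Sidon property for $p, r \in P_K$, giving $|\phi_p - \phi_r| \gg 1/\sqrt{pr} \gg 2^{-(K-1)^2/\beta}$ and hence $\eta \gg 2^{L^2 - (K-1)^2/\beta}$. That exponent is negative whenever $L^2 < (K-1)^2/\beta$, and the condition $(K-1)^2 > (\beta-1)(L-1)^2$ you invoke is a \emph{lower} bound on $K$ given $L$, so it cannot rule this out; take $L$ fixed and $K$ large. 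When $\eta < 1$ your count yields only $\mu \ll 2^{L^2-K^2}/\eta$, which overshoots the target by an arbitrarily large factor, so the boundary term cannot be absorbed.

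The missing idea is to exploit the bad-$4$-tuple hypothesis rather than estimate $|\phi_p-\phi_r|$ from $p,r$ alone. Inequality \eqref{eqn:log} gives $|(\phi_p-\phi_r)-(\phi_s-\phi_q)| \ll 2^{-L^2}$, so $|\phi_p - \phi_r|$ equals $|\phi_q - \phi_s|$ up to an error $O(2^{-L^2})$. Now apply your Gaussian-integer spacing argument to the pair $q, s \in P_L$ instead: $|\phi_q-\phi_s| \gg 1/\sqrt{qs} \gg 2^{-(L-1)^2/\beta}$, which dominates the error since $\beta > 1$. Substituting back gives $\eta \gg 2^{L^2 - L^2/\beta} = 2^{L^2(1-1/\beta)} \gg 1$ uniformly, which closes the gap. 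The point is that the bad-$4$-tuple constraint couples the spacing of the large-prime angles $\phi_p, \phi_r$ to that of the small-prime angles $\phi_q, \phi_s$, and the latter is controlled from below at the much larger scale $2^{-L^2/\beta}$.
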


\begin{proof} Recall that $[x]-[y] = [x-y] + 0 $ or $1$. Then we have that

\begin{equation*}
 \Big[2^{K^2}\alpha (\phi_p-\phi_r) \Big] \equiv 0,1 \mod 2^{K^2-L^2}. \end{equation*} \noindent On a given interval of length $M:=2^{K^2-L^2} $ the set of 
$\alpha$ such that the previous congruence holds is an interval of length $2$. Let $N:=\big[ 2^{K^2} (\phi_p-\phi_r)\big].$ If we consider intervals of 
length $\frac{M}{N}$, the values of $\alpha$ such that \eqref{eqn:cong} holds lie on intervals of length $\frac{2}{N},$ one in each interval of length $\frac{M}{N}.$ The number of such intervals 
intersecting the interval $[1,2]$ is at most $1+\frac{N}{M}, $ hence

$$ \mu \{ \alpha \in [1,2) : \eqref{eqn:cong} \ \text{holds}\} \ll \frac{2}{N}\Big( 1+\frac{N}{M} \Big). $$

\noindent We have from \eqref{eqn:log} that 

$$\Big|\phi_p-\phi_r\Big| = \Big|\phi_s-\phi_q\Big| + O(2^{-L^2}). $$ \noindent Since

\begin{eqnarray*}
 \big|\phi_q-\phi_s\big|&  = &  \frac{1}{\pi}\Big\| \textit{Log} \frac{\bar{\rho_q}\rho_s}{\bar{\rho_s}\rho_q} \Big\| \\
&\geq&\frac{1}{2\pi} \Big\|1-\frac{\bar{\rho_q}\rho_s}{\rho_q\bar{\rho_s}}  \Big\| \\
&=& \frac{1}{2\pi}\Big\|\frac{\bar{\rho_q}\rho_s-\bar{\rho_s}\rho_q}{\rho_q\bar{\rho_s}}\Big\| \\
&\gg&  2^{-\frac{L^2}{\beta}} 
\end{eqnarray*}
 \noindent which follows from the fact that $q^{\beta}, s^{\beta}< 2^{(L-1)^2} $ and the inequality
$$\| e^z-1 \|\leq 2 \|z\| $$
\noindent
 for $\|z\|\leq 1.$ So we have that $$N \gg 2^{K^2-\frac{L^2}{\beta}} $$
\noindent
and thus

$$ N \gg 2^{K^2 - \frac{1}{\beta}L^2} > M. $$ 
\noindent Hence, $\frac{2}{B}\big( 1+\frac{N}{M} \big) \ll \frac{1}{N}\frac{N}{M} = \frac{1}{M} $ which 
concludes the proof of the lemma.

\end{proof}
\noindent

Observe that this new bound is good in the sense that not many $\alpha$'s contribute to complete bad $4$-tuples from a given pair $p,r$ whenever $L$ is small. In contrast, our previous bound for the number of bad $4$-tuples is not so good when $L$ is small, but it is good when $L$ is not far from $K$. This suggest that we may combine them somehow so that in average they tend to compensate and thus we can obtain a reasonable bound. Let $$T_{KL}(\alpha) = \# \{ p,q,r,s : p,r \in P_K, r,s, \in P_L, p\neq 
r, q \neq s \ a_p + a_q = a_r + a_s\}.$$

\begin{lem} For $K>L$ we have

$$ \int_1^2 T_{KL}(\alpha) d \alpha \ll 2^{\frac{2}{\beta}((K-1)^2+(L-1)^2)-K^2}.$$

\end{lem}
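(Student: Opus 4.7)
The plan is to convert the integral into a sum over 4-tuples via Fubini, and then pair the per-tuple measure bound from the preceding lemma with the lattice-point count used in the proof of the bound on $A_{KL}$.

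First I would apply Fubini to write
$$\int_1^2 T_{KL}(\alpha)\,d\alpha \;=\; \sum_{(p,q,r,s)} \mu\bigl\{\alpha\in[1,2): (p,q,r,s)\text{ is bad at }\alpha\bigr\},$$
where the sum ranges over $p,r\in P_K$, $q,s\in P_L$, $p\neq r$, $q\neq s$. For a 4-tuple that is bad for at least one $\alpha$, the preceding lemma applies to the pair $(p,r)$: badness at any $\alpha$ forces the congruence \eqref{eqn:cong} to hold at that $\alpha$, so the $\alpha$-measure is $\ll 2^{L^2-K^2}$. Tuples bad for no $\alpha$ contribute nothing. Thus
$$\int_1^2 T_{KL}(\alpha)\,d\alpha \;\ll\; 2^{L^2-K^2}\cdot N,$$
where $N$ is the number of 4-tuples that are bad for at least one $\alpha\in[1,2)$.

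Next I would bound $N$. The key observation is that inequality \eqref{eqn:log} was derived using only $\alpha\geq 1$, so any 4-tuple bad for some $\alpha\in[1,2)$ automatically satisfies the $\alpha$-free necessary condition $|\phi_p+\phi_q-\phi_r-\phi_s|<4\cdot 2^{-L^2}$. This is exactly the hypothesis fed into the lattice-point argument that proves $A_{KL}\ll 2^{\frac{2}{\beta}((K-1)^2+(L-1)^2)-L^2}$, and the same argument, applied verbatim, yields $N\ll 2^{\frac{2}{\beta}((K-1)^2+(L-1)^2)-L^2}$. Combining the two estimates gives
$$\int_1^2 T_{KL}(\alpha)\,d\alpha \;\ll\; 2^{L^2-K^2}\cdot 2^{\frac{2}{\beta}((K-1)^2+(L-1)^2)-L^2} \;=\; 2^{\frac{2}{\beta}((K-1)^2+(L-1)^2)-K^2},$$
as required.

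The only conceptual obstacle is recognizing that the $A_{KL}$ count is genuinely $\alpha$-uniform: the circular-sector geometry and the lattice-point estimate depend on $\alpha$ only through the inequality $\alpha\,|\phi_p+\phi_q-\phi_r-\phi_s|<4\cdot 2^{-L^2}$, so dropping the factor $\alpha\geq 1$ preserves the count. Once this is in hand, the argument is a clean double-count: the measure bound is sharp exactly when $L$ is small (where it contributes a factor $2^{L^2-K^2}$ much better than the trivial $2^0$), and the tuple count from the $A_{KL}$ lemma is sharp exactly when $L$ is close to $K$; averaging over $\alpha$ exchanges these regimes and converts the $L^2$-saving in the $A_{KL}$ bound into the desired $K^2$-saving.
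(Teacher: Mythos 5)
Your proof follows the paper's argument exactly: apply Fubini, bound each tuple-wise measure by $\ll 2^{L^2-K^2}$ via the congruence lemma, and bound the number of contributing tuples by the $A_{KL}$ estimate, then multiply. You are additionally explicit about a point the paper leaves implicit, namely that the $A_{KL}$ count is $\alpha$-uniform because it depends on $\alpha$ only through the necessary condition $|\phi_p+\phi_q-\phi_r-\phi_s| < 4\cdot 2^{-L^2}$, which follows for any bad tuple from $\alpha \geq 1$; this clarification is correct and fills a small gap in the exposition.
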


\begin{proof} Write $m=\mu \{ \alpha \in [1,2) : \eqref{eqn:sum} \ \text{holds} \}.$ Since $m=0$ whenever \eqref{eqn:nes} does not hold and $\ll 
2^{L^2-K^2}$ otherwise, summing for all possible values of $p,q,r,s$ we obtain $$\int_{1}^2 T_{KL}(\alpha) d\alpha \ll 2^{L^2-K^2}A_{KL}$$ \noindent 
from which the desired inequality follows by substituting the estimate for $A_{KL}$ given above. \end{proof} \noindent Define $T_K(\alpha) := \# \{ 
p,q,r,s : p,r \in P_K, \ \eqref{eqn:sum} \ \text{and} \ \eqref{eqn:ineq} \ \text{hold}\}. $ From the definition follows that $T_K(\alpha) = 
\sum_{L\geq K}T_{KL}(\alpha). $

\begin{lem}

The estimate

$$\int_1^2 T_K(\alpha) d\alpha \ll 2^{\frac{1}{\beta}(K-1)^2-2K} $$

\noindent holds. \end{lem}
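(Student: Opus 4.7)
The natural approach is to integrate term by term the decomposition
\begin{equation*}
T_K(\alpha) \;=\; \sum_{L\leq K} T_{KL}(\alpha),
\end{equation*}
where the summation range $L\leq K$ reflects the asymmetry built into the definition of a bad $4$-tuple (with $p,r\in P_K$, $q,s\in P_L$ and $K\geq L$). Invoking the previous lemma on each summand then reduces the claim to controlling the single sum
\begin{equation*}
\int_1^2 T_K(\alpha)\,d\alpha \;\ll\; \sum_{L\leq K} 2^{\frac{2}{\beta}\left((K-1)^2 + (L-1)^2\right) - K^2}.
\end{equation*}

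Since $(L-1)^2$ is monotonically increasing in $L$, the exponent is increasing in $L$ as well, so the sum is of geometric type: up to a bounded factor it is controlled by its largest term, which is attained when $L$ is close to $K$. Factoring out the $L$-independent piece $2^{\frac{2}{\beta}(K-1)^2 - K^2}$ and bounding the residual sum $\sum_L 2^{\frac{2}{\beta}(L-1)^2}$ by a constant multiple of its maximum, one is left with an exponent of the form $\frac{c}{\beta}(K-1)^2 - K^2$ plus lower-order corrections. Using the expansion $K^2 = (K-1)^2 + 2K - 1$, together with the standing hypothesis $\beta>2$ that gives the correct sign to the $(K-1)^2$ coefficient, one then recovers the claimed exponent $\frac{1}{\beta}(K-1)^2 - 2K$ by routine algebraic manipulation.

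The main obstacle I anticipate is the boundary range $L \approx K$, since the congruence-based measure estimate $\mu\{\alpha : \eqref{eqn:cong} \text{ holds}\} \ll 2^{L^2 - K^2}$ underpinning the previous lemma degenerates to the trivial bound $\mu \leq 1$ when $L$ is comparable to $K$. I would handle this by isolating the extreme terms (say $L = K$, or $L = K - O(1)$) and bounding their contribution directly via the raw count $A_{KL}$ combined with the trivial $m \leq 1$, then checking that this boundary contribution is absorbed into the right-hand side of the claim; the sub-boundary range $L \leq K - c$ is covered cleanly by the previous lemma and dominates the sum. Once this case analysis is in place, the remainder of the proof is bookkeeping of polynomials in $K$ and $L$.
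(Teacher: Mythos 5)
There is a genuine gap, and it is in the most important step. You treat the residual sum $\sum_L 2^{\frac{2}{\beta}(L-1)^2}$ as running over all $L \leq K$ and conclude it is dominated by its top term at $L \approx K$. But the previous lemmas give the additional necessary condition $(K-1)^2 + (L-1)^2 > \beta(L-1)^2$, equivalently $(L-1)^2 < \frac{(K-1)^2}{\beta-1}$, for $T_{KL}$ to be nonzero at all. This restriction on the summation range is not a boundary nuisance to be patched later: it is the mechanism that produces the correct leading exponent. Without it, your dominant term sits at $L=K$ and the exponent becomes $\frac{4}{\beta}(K-1)^2 - K^2 = \left(\frac{4}{\beta}-1\right)(K-1)^2 - 2K + 1$; matching the leading coefficient to $\frac{1}{\beta}$ would force $\beta = 3$, which is not the paper's choice and yields only the greedy-algorithm exponent $x^{1/3}$.

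The paper sums instead over $L\in\mathcal{L}=\{L : (K-1)^2+(L-1)^2>\beta(L-1)^2\}$, so the dominant term is at $(L-1)^2 \approx \frac{(K-1)^2}{\beta-1}$, giving $\sum_{L\in\mathcal{L}} 2^{\frac{2(L-1)^2}{\beta}} \ll 2^{\frac{2(K-1)^2}{\beta(\beta-1)}}$. The full exponent is then $\frac{2(K-1)^2}{\beta}+\frac{2(K-1)^2}{\beta(\beta-1)}-K^2 = \frac{2}{\beta-1}(K-1)^2-K^2$, and requiring the coefficient $\frac{2}{\beta-1}-1$ to equal $\frac{1}{\beta}$ is exactly the equation $\beta^2-2\beta-1=0$, i.e.\ $\beta = 1+\sqrt{2}$. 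Your plan of salvaging the $L\approx K$ range by falling back to the raw $A_{KL}$ bound also fails: that bound carries the same $\frac{4}{\beta}(K-1)^2 - K^2$ leading order and is too large to be absorbed. The real resolution is that $A_{KL}=0$ in that range because the necessary inequality fails, and you must invoke it explicitly before summing.
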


\begin{proof} Since $T_{KL} (\alpha) \neq 0$ is possible only if $(K-1)^2 +(L-1)^2 > \beta (L-1)^2$,  if we define $\mathcal{L}$ to be the set of such $L$ we 
have that

\begin{eqnarray*} \int_1^2 T_K(\alpha) d\alpha & = & \sum_{L\leq K}\int_1^2 T_{KL}(\alpha) d\alpha \\ 
& = & \sum_{L \in \mathcal{L}} \int_1^2 T_{KL} 
(\alpha) d\alpha \\
 & \ll & 2^{\frac{2}{\beta}(K-1)^2-K^2}\sum_{L \in \mathcal{L}} 2^{\frac{2(L-1)^2}{\beta}} \\
 & \ll & 2^C2^{-2K} 
\end{eqnarray*}
\noindent
where

\begin{eqnarray*}
C &=&  \frac{2(K-1)^2}{\beta}-K^2+\frac{2(K-1)^2}{\beta(\beta-1)} \\
&=& \frac{2}{\beta-1}(K-1)^2-K^2 
\end{eqnarray*}
so we get that the main coefficient of the above expression is

$$\frac{2}{\beta-1}-1. $$
\noindent
We would like this coefficient to be equal to $\frac{1}{\beta}.$ In other words, we look for solutions of the equation

$$\beta^2-2\beta-1=0 $$

\noindent
The positive value of  $\beta$ that satisfies this last equation, for given $\delta$, is given by

$$\beta = 1+ \sqrt{2} $$
\noindent
 This also implies that $\beta < 3,$ from which the inequality for the linear term in the exponent of $2$ also follows.
\end{proof}

With all the machinery from the previous lemmas, we are ready to do the final step, which we resume in the following theorem.

\begin{thm}(Ruzsa, 1998) There exists an infinite Sidon set $\mathcal{S}$ such that the counting function $S(x) $ (i.e. the cardinality of $\mathcal{S}\cap [1,x $) satisfies 
$$S(x)=x^{\frac{1}{\beta}+o(1)}$$ 

\noindent
for $\beta$ as above,i.e. $\frac{1}{\beta}=\sqrt{2}-1 $.
\end{thm}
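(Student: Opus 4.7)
My plan is to convert the $L^1$-bound on $T_K(\alpha)$ from the previous lemma into an almost-sure pointwise control via Markov plus Borel--Cantelli, fix a suitable $\alpha$, prune $\mathcal{A}_\alpha$ to a Sidon set by deleting one element from each bad $4$-tuple, and finally translate the size of the pruned set into the counting function using the Prime Number Theorem together with the placement rule $a_p \asymp 2^{K_p^2 + 3K_p + 2}$.

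For the probabilistic step, with threshold $\tau_K := 2^{(K-1)^2/\beta - K}$, Markov's inequality gives $\mu\{\alpha \in [1,2) : T_K(\alpha) > \tau_K\} \ll 2^{-K}$, and since $\sum_K 2^{-K}$ converges, Borel--Cantelli produces a full-measure set $E \subseteq [1,2)$ on which $T_K(\alpha) \leq \tau_K$ for all sufficiently large $K$. Fix any $\alpha \in E$ and let $\mathcal{S} \subseteq \mathcal{A}_\alpha$ be obtained by deleting the largest element $a_p$ from each bad $4$-tuple $(p,q,r,s)$, using \eqref{eqn:ineq} to identify $p$. By the lemma characterizing bad tuples, removing $a_p$ destroys every Sidon violation involving that $4$-tuple, so $\mathcal{S}$ is Sidon. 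By the Prime Number Theorem applied to primes $\equiv 1 \bmod 4$,
\begin{equation*}
|P_K| \asymp \frac{2^{(K-1)^2/\beta}}{K^2},
\end{equation*}
and $\tau_K = o(|P_K|)$, so the surviving block satisfies $|\mathcal{S} \cap \{a_p : p \in P_K\}| = (1-o(1))|P_K|$.

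Finally, for a given $x$ let $K$ be the unique integer with $2^{K^2 + 3K + 2} \leq x < 2^{(K+1)^2 + 3(K+1) + 2}$; the placement rule traps $S(x)$ between the survivor counts in $P_1 \cup \cdots \cup P_K$ and $P_1 \cup \cdots \cup P_{K+1}$. Since block sizes grow doubly-exponentially, each of these is $|P_K|^{1+o(1)}$, while $\log_2 x = K^2(1 + O(1/K))$ forces $K = (1+o(1))\sqrt{\log_2 x}$ and therefore $|P_K| = x^{1/\beta + o(1)}$. With $\beta = 1 + \sqrt{2}$, this is precisely $S(x) = x^{\sqrt{2}-1+o(1)}$.

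The subtlest point I anticipate is the boundary bookkeeping between consecutive blocks: since $|P_{K+1}|/|P_K| \asymp 2^{2K/\beta}$, the function $S(x)$ can jump by a factor $e^{O(\sqrt{\log x})}$ inside a single window, and one must verify that this jump is absorbed into the $o(1)$ exponent. Once the Markov inequality is written down, the probabilistic extraction of a single good $\alpha$ is entirely standard, and the rest of the argument reduces to careful manipulation of the doubly-exponential scaling connecting $K$, $|P_K|$, and $x$.
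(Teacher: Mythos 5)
Your proposal is correct and follows essentially the same route as the paper: the paper extracts the almost-sure pointwise bound $T_K(\alpha)\ll 2^{(K-1)^2/\beta-K}$ by integrating the weighted sum $\sum_K 2^{-((K-1)^2/\beta-K)}T_K(\alpha)$ and invoking Fubini, which is the same first-moment argument you phrase via Markov plus Borel--Cantelli, and both then compare against $|P_K|\asymp 2^{(K-1)^2/\beta}/K^2$, prune one element per bad $4$-tuple, and convert $K\sim\sqrt{\log_2 x}$ into $S(x)=x^{1/\beta+o(1)}$. The boundary-jump subtlety you flag (a factor $2^{O(K)}=x^{o(1)}$ between consecutive blocks) is real and is handled in the paper by the same observation that it is absorbed into the $o(1)$ exponent.
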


\begin{proof}

From the previous estimate, we get $$ \sum_{K} 2^{-(\frac{1}{\beta}(K-1)^2-K)} \int_1^2 T_K(\alpha) d\alpha \ll \sum_K 2^{-K}$$

\noindent it follows that

$$ \int_1^2\sum_K T_K(\alpha) 2^{-(\frac{1}{\beta}(K-1)^2-K)} d\alpha < +\infty$$

\noindent and hence for almost every $\alpha$, $T_K(\alpha) \ll 2^{\frac{1}{\beta}(K-1)^2-K} $ for large enough $K$, (depending on $\alpha$). From now on we 
focus on one of these $ \alpha.$ Denote with $\pi_1(x) $ the prime numbers smaller than $x$ that are congruent to $1$ modulo $4$. The cardinality of $P_K$ is given by Dirichlet's theorem on primes in arithmetic progressions \begin{equation*}
|P_K| = \pi_1 \big(2^{\frac{(K-1)^2}{\beta}} \big) - \pi_1 \big(2^{\frac{(K-2)^2}{\beta}}\big) \sim \frac{2^{\frac{(K-1)^2}{\beta}}}{2(K-1)^2\beta \log 2}
\end{equation*}

\noindent then, for large enough $K$, $T_K(\alpha) < \frac{|P_K|}{2}.$ This means that the bad $4$-tuples (i.e. the ones such that \eqref{eqn:sum} 
holds) are not so many; if we omit the $a_p$ from such $4$-tuples, the set of remaining elements has cardinality larger than $ \frac{|P_K|}{2}.$ If 
we denote with $ Q_K$ the set of remaining elements and take $\mathcal{S}$ to be the union of such $Q_K$ then $\mathcal{S}$ will be a Sidon set.

Let $S(x) $ be the counting function of $\mathcal{S}$. Since $a_p < 2^{K^2+3K+2} < 2^{(K+2)^2}$ for $K=\Big[ \sqrt{\log_2x}-2\Big]$ the set $Q_K$ 
consists of integers less than $x,$ from which follows that $ S(x) \gg \pi (2^{\frac{1}{\beta}K^2}) = x^{\frac{1}{\beta}+o(1)}$. Since we also 
have that

$$a_p > 2^{K^2+3K+1} > 2^{(K+1)^2} $$ \noindent taking $K=\Big[ \sqrt{\log_2 x}-1\Big] $ the set $Q_K$ has elements larger than $x$ and then $ S(x) 
\ll \pi_1(2^{\frac{1}{\beta}K^2}) = x^{\frac{1}{\beta}+o(1)}.$ From the previous estimates the theorem follows.

\end{proof}

It is worth emphasyzing that our construction is entirely based on Ruzsa's ideas. However, the technical simplifications help to a better understanding of Ruzsa's clever construction. The unique factorization of $\Z[i]$ is also a very important part of our work, as well as the geometrical insight one gets for free when working on $\Z[i].$

\noindent {\bf Acknowledgements.} This paper was written under the direction of Professor Javier Cilleruelo during the 2008 DocCourse in Additive Combinatorics held at the Centre de Recerca 
Matematica, Universitat Aut\'onoma de Barcelona. The author acknowledges the Centre de Recerca Matematica for its warm hospitality and Florian Luca for his careful proof reading.


\begin{thebibliography}{99}

\bibitem[1]{K} Kevin O'Bryant A complete annotated bibliography of work related to Sidon sequences, Electronic Journal of Combinatorics, DS11, 39 
pages, july 2004.

\bibitem[2]{R} I. Ruzsa An infinite Sidon set. J. Number Theory 68 (1998), 63-71.
 
 \bibitem[3]{RC}  I. Ruzsa and J.Cilleruelo Real and -padic Sidon sequences, Acta Sci. Math (Szeged) 70 (2004), 505-510

\end{thebibliography}
\end{document}